\documentclass[reqno,12pt]{amsart}
\usepackage{amssymb,amsmath}

\newtheorem{theorem}{Theorem}
\newtheorem{proposition}{Proposition}
\newtheorem{conjecture}{Conjecture}
\newtheorem{problem}{Problem}
\newtheorem{lemma}{Lemma}
\newtheorem{corollary}{Corollary}
\theoremstyle{definition}

\begin{document}

\title{On the isomorphism problem of Cayley digraphs}
\author{P\'eter P.~P\'alfy}
\address{HUN-REN Alfr\'ed R\'enyi Institute of Mathematics,
Re\'altanoda utca 13--15, 1053 Budapest, Hungary and
E\"otv\"os University, Department of Algebra and Number Theory}
\email{ppp@renyi.hu}
\thanks{This research was supported by the National Research, Development and
Innovation Office of Hungary (NKFIH), grant no.~153681}

\begin{abstract}
A finite group $G$ is called a DCI-group provided any two Cayley digraphs $Cay(G,S)$ and $Cay(G,T)$ are isomorphic iff the two connection sets $S$ and $T$ are images of each other via automorphisms of the group $G$. Building on the vast literature on CI- and DCI-groups we present an updated list of possible DCI-groups. In addition, we give an
alternative proof that does not rely on deep results from group theory. We also point out the three most important open problems related to the classification of DCI-groups.
\end{abstract}

\maketitle

\section{Results and open problems}

If $\alpha$ is an automorphism of the finite group $G$ and $S\subset G$, then the Cayley digraphs $Cay(G,S)$ and $Cay(G,S^\alpha)$ are obviously isomorphic. If the converse is also true, i.e., if any two Cayley digraphs of the group $G$, $Cay(G,S)$ and $Cay(G,T)$ can be isomorphic only when $T$ is the image of $S$ under some group automorphism, then we say that $G$ is a DCI-group. If we require this property only for undirected graphs (that is, whenever $S^{-1}=S$ and $T^{-1}=T$) then $G$ is called a CI-group. The definition goes back to L\'aszl\'o Babai \cite{Babai}, who was inspired by a research problem published by Andr\'as \'Ad\'am \cite{Adam} in 1967. Determining which groups are CI- or DCI-groups is still an open problem, despite the extensive literature devoted to studying Cayley (di)graphs from this point of view. Some recent progress, especially a  breakthrough result by Ted Dobson, Mikhail Muzychuk, and Pablo Spiga \cite{DobsonMuzySpiga} makes it possible to restrict further the class of candidates satisfying these elusive properties. In \cite{DobsonMuzySpiga} a list of putative CI-groups is given. However, the subtle differences between CI-groups and DCI-groups make it more promising to deal with DCI-groups first. For example, there exist exactly three cyclic CI-groups that are not DCI-groups, as it is established by the most prominent result in this area of research, the determination of cyclic CI- or DCI-groups due to Mikhail Muzychuk \cite{Muzy}, that gives the solution to \'Ad\'am's original problem.

\begin{theorem}\label{Muzy}
\textrm{(Muzychuk \cite{Muzy})} The cyclic group $C_n$ of order $n$ is a DCI-group iff $n=2^km$, where $k\in\{0,1,2\}$ and $m$ is an odd square-free integer. In addition, the cyclic groups of order $8$, $9$, and $18$ are CI-groups (but not DCI-groups).
\end{theorem}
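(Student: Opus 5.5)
This is Muzychuk's classification, and I would prove it in two halves: necessity by explicit non-isomorphism-inducing constructions, and sufficiency via Schur rings over cyclic groups.

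\emph{Necessity.} I would use two standard observations. First, the DCI property passes to subgroups: if $H\le G$ and $S,T\subseteq H$, then $Cay(G,S)\cong Cay(G,T)$ whenever $Cay(H,S)\cong Cay(H,T)$, since each is a disjoint union of copies of the $H$-digraph. An automorphism of a cyclic $G$ mapping $S$ to $T$ restricts to an automorphism of $H$. Hence it suffices to show that $C_8$ and $C_{p^2}$ ($p$ odd) are not DCI-groups.

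For $C_{p^2}$ with generator $x$, let $P=\langle x^p\rangle$. I would take $S=\{x\}\cup P^\ast$-type sets arranged so that $Cay$ is a wreath-type product $\vec C_p[\,\cdot\,]$. Concretely, I would take $S=xP\cup\{x^p\}$ and $T=xP\cup\{x^{2p}\}$. Both digraphs are the lexicographic product of a directed $p$-cycle with $Cay(C_p,\{1\})$ and $Cay(C_p,\{2\})$ respectively, and these are isomorphic because $\{1\}$ and $\{2\}$ are equivalent in $C_p$ independently on each block. However, an automorphism $x\mapsto x^a$ fixing $xP$ setwise must satisfy $a\equiv1\pmod p$, so it acts trivially on $P$ and cannot send $x^p$ to $x^{2p}$. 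For $C_8$, a similar construction with $S=\{1,2,5\}$-type sets works, checked by direct verification. Undirected analogues of these fail exactly for $n=8,9,18$, which explains the CI exceptions; those three positive CI statements I would verify by computer or by a short Schur-ring check.

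\emph{Sufficiency.} Let $n=2^km$ as in the statement. By Babai's lemma, $G$ is DCI iff for every Cayley digraph $\Gamma$ on $G$, any two regular cyclic subgroups of $\mathrm{Aut}(\Gamma)$ are conjugate in $\mathrm{Aut}(\Gamma)$. It therefore suffices to show that in the 2-closure $A=\mathrm{Aut}(\Gamma)$ all regular subgroups isomorphic to $C_n$ are conjugate. I would pass to the Schur ring $\mathcal S=V(G,A_1)$ and use the Leung--Man classification of Schur rings over cyclic groups: each is trivial, cyclotomic (orbit ring of a subgroup of $\mathrm{Aut}(C_n)$), a tensor product, or a generalized wreath product. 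I would then induct on $n$.

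In the cyclotomic case, $A=G\rtimes K$ with $K\le\mathrm{Aut}(G)$ up to 2-closure, and Sylow-type arguments show that the regular cyclic subgroups are conjugate. The tensor-product case splits along coprime factors, and the hypothesis is inherited. The hard case, and the main obstacle, is the generalized wreath product $U/L$ with $L\le U$. Here one must glue conjugating elements from the sections $U$ and $G/L$ compatibly over $U/L$. Square-freeness of $m$ and $k\le2$ are exactly what make the relevant automorphism groups of $U/L$ small enough for the gluing to succeed. I would try to prove a ``key lemma'': if $G_1,G_2$ are regular cyclic in $A$ and agree modulo $L$ and on $U$, then they are conjugate. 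I would attempt this by counting orbits and using that $\gcd(|L|,|G/U|)$ forces a unique cyclic Sylow structure.
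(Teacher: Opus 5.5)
The paper gives no proof of this theorem; it simply quotes Muzychuk. So your proposal has to stand on its own.

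\textbf{Necessity.} This half is essentially fine. Heredity to subgroups works for cyclic groups because subgroups are characteristic. Your $C_{p^2}$ pair is correct: a multiplier $x\mapsto x^a$ sending $S$ to $T$ must map the coset $xP$ onto $xP$, so $a\equiv 1\pmod p$, and then $a$ fixes $x^p$.

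For $C_8$ you must actually name $T$. The naive analogue of the $C_{p^2}$ trick fails: with $P=\langle x^2\rangle$, the automorphism $x\mapsto x^3$ maps $xP\cup\{x^2\}$ onto $xP\cup\{x^6\}$. A correct pair, written additively, is $S=\{1,2,5\}$, $T=\{1,5,6\}$. The involution $\phi$ fixing $0,4,1,5$ and swapping $2\leftrightarrow 6$, $3\leftrightarrow 7$ is an isomorphism $Cay(C_8,S)\to Cay(C_8,T)$. On the other hand, the $\mathrm{Aut}(C_8)$-orbit of $S$ is only $\{\{1,2,5\},\{3,6,7\}\}$. A machine check of the CI property for $n=8,9,18$ is acceptable as a finite verification.

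\textbf{Sufficiency: the genuine gap.} This half, which is the whole content of Muzychuk's theorem, is not proved.

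In the cyclotomic case, $\mathrm{Aut}(\mathcal S)$ is the $2$-closure of $G\rtimes K$. This can be far larger than $G\rtimes K$: for $n=p$ and $K=\mathrm{Aut}(C_p)$ it is $\mathrm{Sym}(p)$. Saying ``Sylow-type arguments'' is not an argument. Reducing to $\mathrm{Aut}(\mathcal S)=G\rtimes K$ needs a further structure theorem, for instance that a circulant S-ring which is not a proper generalized wreath product is a tensor product of a normal S-ring with rank-$2$ ones. Even after that, you still have to show where the arithmetic of $n$ enters.

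More seriously, the generalized wreath product case is only a lemma you ``would try to prove'', and the obstruction there is real. Take your own $C_8$ example. The S-ring of $Cay(C_8,\{1,2,5\})$ has basic sets $\{0\},\{2\},\{4\},\{6\},\{1,5\},\{3,7\}$. So it is the $U/L$-wreath product with $U=\langle 2\rangle$ and $L=\langle 4\rangle$. Let $G$ be the translation group and $\phi G\phi$ its conjugate by the isomorphism above. Then:
\begin{itemize}
\item both lie in $A=\mathrm{Aut}(Cay(C_8,S))$;
\item they induce the same groups on $G/L$ and on every $U$-coset;
\item they are not conjugate in $A$, since otherwise a group automorphism would map $S$ to $T$.
\end{itemize}

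So ``agreeing modulo $L$ and on $U$'' does not suffice in general. The conjugators supplied by induction live in $\mathrm{Aut}(\mathcal S_U)$ and $\mathrm{Aut}(\mathcal S_{G/L})$, not a priori in $A$. They must be realized in $A$ and made compatible on $U/L$.

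Your proposed mechanism cannot detect this obstruction. Here $\gcd(|L|,|G/U|)=2$, exactly as for the DCI group $C_4$ with $L=U$ of order $2$. For admissible $n$ this gcd is always at most $2$, so any argument using only that fact would also ``prove'' conjugacy in the $C_8$ example.

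What is missing is a precise gluing criterion over the section $U/L$, together with a proof that it holds for every S-ring over $C_n$ when $n=2^km$ with $k\le 2$ and $m$ odd square-free. That is the heart of the matter, and your sketch supplies neither.

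A smaller point: sections of $\mathcal S$ are colored digraphs, not single Cayley digraphs. Your induction hypothesis must therefore be stated for Schurian S-rings, i.e.\ $2$-closed groups containing a regular $C_n$.
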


If the following conjecture is true, then it will be enough to consider coprime indecomposable groups, i.e., such groups that cannot be written as a direct product of two proper subgroups of coprime orders.

\begin{conjecture}
(Kov\'acs and Muzychuk \cite{KovacsMuzy}; Dobson \cite[Conjecture 43]{Dobson}) If $G_1$ and $G_2$ are DCI-groups, such that $(|G_1|,|G_2|)=1$, then $G_1\times G_2$ is a DCI-group as well.
\end{conjecture}

This is clearly not true for CI-groups, as the examples of $C_8\times C_3$ and $C_9\times C_4$ show. There are several cases when the validity of this conjecture has been verified: $C_p^4\times C_q$ ($p\ne q$ primes --- Kov\'acs and Ryabov~\cite{KovacsRyabov}), $C_p^2\times C_m$ ($p$ a prime, $m$ square-free and not divisible by $p$ --- Kov\'acs et al.~\cite{Kovacs5}), $C_p^2\times C_q^2$ ($p\ne q$ primes --- Kov\'acs et al.~\cite{Kovacs5}), $Q_8\times C_p$ ($Q_8$ the quaternion group of order and $8$, $p$ an odd prime --- Somlai~\cite{Somlai2}). These results had several predecessors. However, since subgroups of DCI-groups are DCI-groups themselves, we do not list here those earlier results that follow from the ones mentioned above.

The main result of the present paper shows that the structure of coprime indecomposable DCI-groups is severely restricted.

\begin{theorem}\label{main}
If $G$ is a coprime indecomposable DCI-group, then one of the following occurs:
\begin{enumerate}
  \item[(a)] $G=C_p^r$ is an elementary abelian $p$-group for some prime $p$ and $r\ge1$;
  \item[(b1)] $G=D_{2m}=\langle a,b\,|\, a^m=1, b^2=1, b^{-1}ab=a^{-1}\rangle$ is a dihedral group of order $2m$, where $m>1$ is an odd square-free integer;
  \item[(b2)] $G=\widehat{D}_{4m}=\langle a,b\,|\, a^m=1, b^4=1, b^{-1}ab=a^{-1}\rangle$ of order $4m$ is a central extension of a dihedral group, where $m>1$ is an odd square-free integer;
  \item[(c)] $G$ is one of the groups $C_4$, $Q_8$, $Alt(4)$ (the alternating group of degree 4 and order 12).
\end{enumerate}
\end{theorem}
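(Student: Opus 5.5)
The plan is to combine the standard inheritance properties of DCI-groups with a handful of explicit non-DCI witnesses, and so avoid the deep group-theoretic inputs (such as the classification-dependent results used in earlier lists). Two facts do most of the work. First, subgroups of DCI-groups are DCI-groups. Second, by Theorem~\ref{Muzy}, every cyclic subgroup of $G$ has order $2^km$ with $k\le 2$ and $m$ odd square-free. Hence every element of $G$ has order dividing $4m$ with $m$ odd square-free. In particular, every Sylow $p$-subgroup for odd $p$ has exponent $p$, and the Sylow $2$-subgroup has exponent dividing $4$.

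The first step is to constrain the Sylow subgroups by exhibiting small non-DCI groups that can then be excluded as subgroups. These are nonabelian groups of order $p^3$ and exponent $p$, $C_4\times C_2$, $D_8$, and similar groups, for each of which I would write down an explicit pair of isomorphic Cayley digraphs whose connection sets are not equivalent under automorphisms. Since a $p$-group without such subgroups must be elementary abelian, cyclic of order $4$, or $Q_8$, this forces the Sylow $p$-subgroups for odd $p$ to be elementary abelian. It also forces the Sylow $2$-subgroup to be elementary abelian, $C_4$, or $Q_8$ (up to the usual small cases).

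Next I would analyse the interaction between distinct primes. I would show that for primes $p\ne q$ a group $C_p^r\rtimes C_q$ with a nontrivial action is DCI only in very restricted cases. The action must be inversion, which gives the dihedral-type groups ($q=2$ or the $C_4$ extension), or it must be the case $C_2^2\rtimes C_3=Alt(4)$. For $C_p\rtimes C_q$ with $q$ odd, or with $q=2$ acting nontrivially but not by inversion, I would construct a non-DCI witness. The standard trick is to use a digraph whose automorphism group contains two nonconjugate regular copies of $G$, via a wreath-type or lexicographic construction. Combining this with coprime indecomposability, $G$ has a normal Hall subgroup structure. The odd part $N$ is abelian of square-free exponent, and any pair of odd primes must centralize each other, since otherwise $C_p\rtimes C_q$ would appear. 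This makes the odd part an abelian normal subgroup. A $2$-part must then act on it by inversion (yielding (b1) and (b2)), or $G$ is a $p$-group (yielding (a), $C_4$, and $Q_8$), or $G$ is the exceptional $Alt(4)$. Exclusions such as $Q_8\rtimes C_3$, $C_2^r$ extended by $C_3$ with $r>2$, and $Alt(4)\times C_2$ again need explicit witnesses. The square-free condition on $m$ in (b1) and (b2) comes from Theorem~\ref{Muzy}.

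The hardest step will be showing that an odd prime $p$ dividing the order cannot have Sylow subgroup $C_p^r$ with $r\ge 2$ in the dihedral-type cases. That is, I must prove that the generalized dihedral group over $C_p^2$, and its $C_4$ analogue, is not DCI. At the same time, I must handle nonabelian odd-order configurations without appealing to solvability theorems. For these I would first try explicit connection sets $S$ and $T$ that differ by a non-automorphism permutation preserving the colour classes of a suitable normal series, and check non-equivalence by counting orbits.
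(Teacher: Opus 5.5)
\textbf{Gap: the local-to-global step.} Your local analysis matches what the paper does in its preliminary remarks and in Lemma~\ref{nonnilpotent}: you pin down the Sylow structure with small witnesses and restrict which groups $C_p^r\rtimes C_q$ can occur. The problem is the sentence ``Combining this with coprime indecomposability, $G$ has a normal Hall subgroup structure.'' You assert it without proof, and it is the whole difficulty of the theorem.

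Nothing in your plan produces normal subgroups. Knowing which subgroups $P\rtimes\langle b\rangle$ are allowed tells you nothing until you know that such a subgroup must exist whenever Sylow subgroups fail to commute. Your claim that odd Sylow subgroups centralize each other ``since otherwise $C_p\rtimes C_q$ would appear'' therefore needs a real mechanism. Two options are $N_G(P)>C_G(P)$ combined with Burnside's normal $p$-complement theorem, or Hall systems in a solvable group.

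Excluding forbidden subgroups cannot work on its own. Every proper subgroup of $Alt(5)$ is a DCI-group: these are $C_2$, $C_3$, $C_5$, $C_2^2$, $D_6$, $D_{10}$ and $Alt(4)$. Yet $Alt(5)$ has no normal Hall structure at all. So you need two more things:
\begin{enumerate}
\item[(i)] an argument showing that $Alt(5)$ is the only configuration escaping the local restrictions;
\item[(ii)] an explicit witness that $Alt(5)$ itself is not DCI.
\end{enumerate}
Your proposal never mentions $Alt(5)$.

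\textbf{How the paper supplies this.} It does so in one of two ways.
\begin{enumerate}
\item[(1)] In Section~2 it uses the solvability of DCI-groups (Li, building on Babai--Frankl). It takes a Sylow system, so that each $P_iP_j$ is a subgroup, and then Proposition~\ref{pqDCI} shows that the non-nilpotency graph on the primes is a star centred at $2$.
\item[(2)] In Section~3 it avoids solvability.
\begin{enumerate}
\item[(a)] Burnside's normal $p$-complement theorem together with Lemma~\ref{nonnilpotent} gives Corollary~\ref{odd}.
\item[(b)] The same tools give a normal $2$-complement whenever the Sylow $2$-subgroup is $C_4$, is elementary abelian of order at least $8$, or is $Q_8$. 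The $Q_8$ case uses Frobenius' criterion and the exclusion of $\mathrm{SL}(2,3)$.
\item[(c)] In the remaining case, with Sylow $2$-subgroup $C_2^2$ and no normal $2$-complement, the paper shows that every odd Sylow subgroup has prime order and a dihedral normalizer.
\item[(d)] It then partitions $G$ into the Sylow $2$-subgroups and the cyclic subgroups of orders $n_i$.
\item[(e)] A R\'edei-style count forces $(n_1-2)(n_2-2)=3$, so $G\cong Alt(5)$, which is excluded by the Li or Conder--Li graphs.
\end{enumerate}
\end{enumerate}
Some argument of this kind is indispensable: transfer plus a partition/counting argument and an explicit witness for $Alt(5)$, or else solvability.

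\textbf{Secondary point.} You flag the exclusion of the generalized dihedral group over $C_p^r$ with $r\ge2$, and its $C_4$-analogue, as your hardest step. The paper handles it by citing the Dobson--Muzychuk--Spiga theorem, a substantial recent result. ``Try explicit connection sets'' is not yet an argument for it.
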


Since the groups in cases (b1), (b2), and (c) all have even order, we obtain a neat corollary.

\begin{corollary}\label{odd}
Every DCI-group of odd order is abelian with elementary abelian Sylow subgroups.
\end{corollary}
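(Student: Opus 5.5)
The corollary should follow from Theorem~\ref{main} by decomposing $G$ into coprime indecomposable factors. Let $G$ be a DCI-group of odd order. First I will write $G=G_1\times\cdots\times G_t$ with each $G_i$ a coprime indecomposable group and the orders $|G_i|$ pairwise coprime. Such a decomposition exists by induction on $|G|$: if $G$ is not itself coprime indecomposable, then $G=A\times B$ with $A,B$ proper subgroups and $(|A|,|B|)=1$, and I decompose $A$ and $B$ further.

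The factors $G_i$ are subgroups of $G$. As recalled earlier in the paper, subgroups of DCI-groups are DCI-groups. For completeness, here is the standard reason. Take $H\le G$ and $S,T\subseteq H$ with $Cay(H,S)\cong Cay(H,T)$. Then $Cay(G,S)$ is a disjoint union of copies of $Cay(H,S)$, one on each left coset of $H$. The same holds for $T$, so $Cay(G,S)\cong Cay(G,T)$. The DCI property of $G$ then gives $\alpha\in Aut(G)$ with $S^\alpha=T$. Since $S$ and $T$ generate the same subgroup $K\le H$ (up to the automorphism), the restriction $\alpha|_K$ is an isomorphism onto that subgroup. To extend to $H$ one needs a bit more care, and the standard argument handles this. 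I will simply cite the known fact. So each $G_i$ is a coprime indecomposable DCI-group.

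Now I apply Theorem~\ref{main} to each $G_i$. Since $|G_i|$ divides $|G|$, it is odd. Cases (b1), (b2) and (c) all consist of groups of even order: $2m$, $4m$, $4$, $8$ and $12$. Hence every $G_i$ falls under case (a), so $G_i\cong C_{p_i}^{r_i}$ is elementary abelian. Coprimality forces the primes $p_i$ to be distinct. Therefore $G$ is abelian, and its Sylow $p_i$-subgroup is exactly $G_i$, which is elementary abelian.

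The only step that needs care is the inheritance of the DCI property by subgroups, and that is a standard fact already quoted in the text. Everything else is a direct reading of the list in Theorem~\ref{main}.
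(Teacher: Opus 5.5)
Your argument is correct, and it is the deduction the paper itself makes right after Theorem~\ref{main}: the groups in (b1), (b2), (c) have even order. You spell out two steps the paper leaves implicit, the splitting into coprime indecomposable direct factors and the inheritance of the DCI property by subgroups.

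On the second step, your sketch stops short, since an isomorphism $\langle S\rangle\to\langle T\rangle$ need not extend to $H$. Citing the fact is legitimate, as the paper calls it easy to see. Here is a short argument that avoids extension altogether.
\begin{itemize}
\item Discard the identity from $S$ and $T$; it lies in both or in neither, since it produces loops.
\item If $\langle S\rangle=H$, then $Cay(H,T)$ is connected as well, so $\langle T\rangle=H$. The automorphism $\alpha$ of $G$ with $S^\alpha=T$ then satisfies $H^\alpha=\langle T\rangle=H$, and its restriction to $H$ is what you need.
\item Otherwise, apply the same reasoning to the complements $H\setminus(S\cup\{1\})$ and $H\setminus(T\cup\{1\})$. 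Their Cayley digraphs are the complementary digraphs, hence still isomorphic. One of $S$ and its complement must generate $H$, because no group is the union of two proper subgroups.
\end{itemize}

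The paper also gives a separate, direct proof at the start of Section~3, and that is the one it actually relies on.
\begin{itemize}
\item All Sylow subgroups are elementary abelian.
\item All outcomes of Lemma~\ref{nonnilpotent} have even order, so in odd order no element of prime order $q$ acts nontrivially on a Sylow $p$-subgroup $P$. Hence $N_G(P)=C_G(P)$ for every $p$.
\item Burnside's normal $p$-complement theorem then makes $G$ the direct product of its Sylow subgroups.
\end{itemize}

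Your route is shorter once Theorem~\ref{main} is available. However, it inherits everything Section~2 rests on: solvability of CI-groups (Babai--Frankl and Li), Hall's Sylow systems, the Li--Lu--P\'alfy list, and Dobson--Muzychuk--Spiga. The direct proof needs little beyond Lemma~\ref{nonnilpotent}, and in odd order it does not even touch the Dobson--Muzychuk--Spiga part of that lemma. It is also forced by the structure of Section~3, which uses the corollary as an ingredient in re-proving Theorem~\ref{main}. Deriving the corollary from Theorem~\ref{main} there would be circular.
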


If the order of the coprime indecomposable group is divisible by exactly two distinct prime numbers, then we have a complete description of DCI-groups.

\begin{proposition}\label{pqDCI}
Let $G$ be a finite group such that its order has exactly two distinct prime divisors, and suppose that $G$ is coprime indecomposable (i.e., not nilpotent).
Then $G$ is a DCI-group iff $G$ is one of the groups $D_{2p}$, $\widehat{D}_{4p}$ (with an odd prime $p$), $Alt(4)$.
\end{proposition}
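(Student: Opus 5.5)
The proof has two directions. For the ``only if'' direction I would apply Theorem~\ref{main}. The group $G$ is coprime indecomposable and its order has exactly two prime divisors, so case (a) is excluded, since that case gives prime power order. Likewise $C_4$ and $Q_8$ in case (c) are excluded. What remains is $Alt(4)$, a dihedral group $D_{2m}$, or a group $\widehat{D}_{4m}$, where $m>1$ is odd and square-free. Since $|G|$ has only two prime divisors, $m$ must be a single odd prime $p$. This gives exactly the list in Proposition~\ref{pqDCI}.

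For the ``if'' direction we must show that $D_{2p}$, $\widehat{D}_{4p}$ and $Alt(4)$ are DCI-groups. For $D_{2p}$ and $Alt(4)$ I would cite the literature: Babai proved that $D_{2p}$ is a (D)CI-group, and the DCI property of $Alt(4)$ is a known result that can also be checked by computer, since the group has order $12$.

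The group $\widehat{D}_{4p}=C_p\rtimes C_4$ is the main obstacle. I would use Babai's criterion: $G$ is a DCI-group iff for every $S$, all regular subgroups of $\mathrm{Aut}(Cay(G,S))$ isomorphic to $G$ are conjugate to the right regular representation $G_R$. The plan is a Sylow argument. Let $A=\mathrm{Aut}(Cay(G,S))$ and suppose $\sigma^{-1}G_R\sigma\le A$ is regular. The Sylow $p$-subgroups of both regular subgroups have order $p$. I would show that they can be simultaneously conjugated inside $A$ so that they coincide; a standard argument with the characteristic subgroup $C_p$ should give this, using the block system formed by its orbits. The quotient digraph on the $4$ blocks is then a Cayley digraph of $C_4$, which is a DCI-group by Theorem~\ref{Muzy}. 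Each block carries a digraph on $p$ points, a Cayley digraph of $C_p$, which is also DCI. Having the same $C_p$ normal subgroup, together with these two facts, should let me conjugate the $C_4$ part as well. I would do this with a wreath-product style analysis: either $A$ restricted to the blocks is $2$-closed and small, or $Cay(G,S)$ is a wreath (lexicographic) product. In the lexicographic case conjugacy follows from the DCI property of the factors.

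If this becomes too intricate, I would fall back on citing the known result that $\widehat{D}_{4p}$ is a DCI-group, since the paper lists it as a known DCI-group.
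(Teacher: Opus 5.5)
\textbf{The ``only if'' direction is circular as written.} Your case check, reading off the two-prime cases from Theorem~\ref{main}, is correct as a deduction. The problem is that this paper proves Proposition~\ref{pqDCI} before Theorem~\ref{main} precisely because the Section~2 proof of Theorem~\ref{main} uses it as a lemma. It is the proposition that shows every edge of the graph $\Gamma$ contains the prime $2$, and that each non-nilpotent $P_iP_j$ is $D_{2p}$, $\widehat{D}_{4p}$ or $Alt(4)$.

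Your route is legitimate only if you say explicitly that Theorem~\ref{main} comes from the alternative proof of Section~3. That proof runs on:
Lemma~\ref{nonnilpotent};
the normal complement theorems of Burnside and Frobenius;
the failure of the DCI property for $\mathrm{SL}(2,3)$ and $Alt(5)$;
and the R\'edei-type counting argument.
It does not need the proposition. The one place there where a $\{2,3\}$-subgroup $SP$ is recognized as $Alt(4)$ follows from Lemma~\ref{nonnilpotent} together with the fact that a DCI-group cannot contain both a central and a non-central subgroup of order $3$.

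Even with that repair, you obtain a special case from the full classification. The paper argues directly instead. It starts from the list of candidate CI-groups with two prime divisors in \cite{LiLuPPP}. The only candidates not in the statement are $M\rtimes\langle z\rangle$ with $M$ elementary abelian of order at least $p^2$ inverted by $z$ of order $2$ or $4$, and $M\rtimes\langle z\rangle$ with $z$ of order $3$ acting nontrivially as a power map. Both are excluded by Lemma~\ref{nonnilpotent}.

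\textbf{The ``if'' direction for $\widehat{D}_{4p}$ rests on citation, not on your sketch.} The sketch is not a proof. The step where the order-$p$ subgroups of the two regular subgroups are conjugated to coincide inside $A=\mathrm{Aut}(Cay(G,S))$ is the whole difficulty. The orbits of such a $C_p$ need not form an $A$-invariant block system. Also, a Sylow $p$-subgroup of $A$ can be larger than $p$, and then two semiregular subgroups of order $p$ in it need not be $A$-conjugate. So this direction depends on your fallback citation. The right source is \cite{LiLuPPP}, whose CI argument carries over to the DCI property. Appealing to the paper's own list would again be circular, since that list is the statement you are proving. With \cite{Babai} for $D_{2p}$ and \cite{ConderLi} for $Alt(4)$, this is exactly what the paper does.
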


In Section 2 we will prove these results, here we just comment on the various cases appearing in Theorem~\ref{main}.

Let $G=C_p^r$ be an elementary abelian $p$-group of rank $r$. In a major work Yang-Quan Feng and Istv\'an Kov\'acs \cite{FengKovacs} proved that if $r\le 5$ then $C_p^r$ is a DCI-group. For $p=2$ this is best possible, as $C_2^6$ is not a CI-group, see Lewis Nowitz \cite{Nowitz}. For $p=3$ Pablo Spiga \cite{Spiga} gave an example showing that $C_3^8$ is not a CI-group, while the cases $r=6$ and $r=7$ are still open. For arbitrary primes G\'abor Somlai \cite{Somlai1} gave a very smart construction yielding that $C_p^{2p+3}$ is not a CI-group. He informed me that with a slight modification of his construction ChatGPT lowered the rank by 1, that is, $C_p^{2p+2}$ is not a CI-group either. The big gap between the positive and negative results warrants the following questions.

\begin{problem}
For a given prime $p$, what is the largest $r$ such that $C_p^r$ is a DCI-group? Is it bounded for all primes $p$ or the rank increases with $p$?
\end{problem}

The groups $D_{2m}$ and $\widehat{D}_{4m}$ in cases (b1) and (b2) of Theorem~\ref{main} have the same order as cyclic groups in Muzychuk's celebrated result (Theorem~\ref{Muzy}),
and contain a cyclic subgroup of index two.
In a recent preprint Kov\'acs and Somlai \cite{KovacsSomlai} settled the case of groups belonging to case (b1) by proving the following remarkable result:

\begin{theorem}(Kov\'acs and Somlai \cite{KovacsSomlai})
All dihedral groups $D_{2m}$ of square-free orders are DCI-groups.
\end{theorem}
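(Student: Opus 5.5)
Since $D_{2m}=C_m\rtimes C_2$ with $m$ odd square-free, I would prove the theorem by Babai's criterion. A group $G$ is a DCI-group iff, for every $S\subseteq G$, any two regular subgroups of $\mathrm{Aut}(Cay(G,S))$ isomorphic to $G$ are conjugate there. It suffices to take $A$ a 2-closed overgroup of the right regular representation $G_R$, with $\sigma G_R\sigma^{-1}\le A$ another regular copy, and show that the two copies are conjugate in $A$. Let $C=\langle a\rangle_R\cong C_m$ be the index-two cyclic subgroup. The overall plan is to reduce to Muzychuk's theorem (Theorem~\ref{Muzy}) for $C_m$ and $C_{2m}$, both DCI since $m$ is odd square-free, and then handle the extra involution.

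\textbf{Step 1.} Use the Sylow-type reduction standard in this area. For each prime $p\mid m$, the Sylow $p$-subgroups of $G_R$ and of $G_R^\sigma$ have order $p$. I would conjugate them simultaneously into a common Sylow $p$-subgroup of $A$, working prime by prime via the Babai--Dobson ``$p$-step'' arguments. Since $m$ is square-free, all Sylow subgroups of $C$ are cyclic of prime order. The key sub-claim is that the cyclic parts $C$ and $C^\sigma$ can be made conjugate in $A$. To get this, I would pass to the orbit partition of $C$, which consists of two blocks of size $m$. If $A$ preserves this partition, the restriction to a block gives a 2-closed group containing two regular $C_m$'s, and Muzychuk's theorem applies. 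If it does not, I would use Schur-ring theory on $\mathbb{Z}[G]$. Any S-ring over $D_{2m}$ is built from S-rings over $C_m$ and $C_2$ by generalized wreath or tensor products. This follows from a Leung--Man-style classification of S-rings over cyclic groups restricted to the index-two subgroup, which I would reprove for this dihedral setting.

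\textbf{Step 2.} Assume $C^\sigma=C$ after conjugation. Then both regular groups lie in $N=N_A(C)$, and both are of the form $C\rtimes\langle t\rangle$ with $t$ an involution swapping the two $C$-orbits. Two such involutions $t,t'$ differ by an element of the centralizer structure of $C$ inside $N$. The question then reduces to a cohomological computation: conjugacy of complements to $C$ in $C\langle t,t'\rangle$, which is governed by $H^1(C_2,C)$. Because $|C|$ is odd, this group vanishes, so the two complements are conjugate.

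\textbf{Main obstacle.} The hard part is Step 1, namely forcing the two regular copies of the cyclic subgroup to be conjugate when $A$ does not respect the orbits of $C$. Here I would run induction on the number of prime factors of $m$ via the S-ring decomposition: a nontrivial $A$-invariant partition yields a quotient Cayley digraph on a smaller dihedral or cyclic group, to which induction applies. The remaining primitive-like cases would be handled by the Burnside--Schur theorem, which forces 2-transitivity on blocks and hence a trivial graph.
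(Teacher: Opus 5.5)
The paper gives no proof of this statement. It only quotes it from the preprint of Kov\'acs and Somlai, so your proposal has to stand on its own, and it has a genuine gap at its core.

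\textbf{Step 1 is the whole theorem.} Since $m$ is odd, $C=\langle a\rangle$ is the unique subgroup of order $m$ in $D_{2m}$. So if $G_R$ and $G_R^\sigma$ are conjugate in $A$, then $C$ and $C^\sigma$ are too. Conversely, a repaired Step 2 (below) recovers conjugacy of the whole groups from conjugacy of $C$ and $C^\sigma$. Hence your ``key sub-claim'' is equivalent to the theorem, and it is exactly the part you only sketch. The tools you name do not deliver it:
\begin{enumerate}
\item[(i)] Sylow's theorem moves one Sylow subgroup at a time, not all primes by a single element of $A$. Moreover, two subgroups of order $p$ lying in a common Sylow $p$-subgroup of $A$ need not be $A$-conjugate.
\item[(ii)] Restricting to a block $B$ of $\{C,Cb\}$ fails in two ways. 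First, $A_{\{B\}}^B$ is not in general $2$-closed, so Muzychuk's theorem only yields a conjugating permutation in its $2$-closure, which need not be induced by $A$. Second, even an element of $A$ matching $C^\sigma$ with $C$ on $B$ says nothing about the other block. Synchronizing the blocks is the real difficulty. DCI blocks and quotients do not suffice, as $C_8$ shows (block $C_2$, quotient $C_4$), and your plan has no gluing step where the square-free hypothesis would enter.
\item[(iii)] Your S-ring claim is false. For $Cay(D_{2m},D_{2m}\setminus\langle b\rangle)\cong K_{m\times 2}$ the basic sets are $\{1\}$, $\{b\}$, $D_{2m}\setminus\langle b\rangle$, so $C$ is not even an $\mathcal{A}$-subgroup.
\item[(iv)] The same example breaks your induction. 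Its blocks are cosets of the non-normal subgroup $\langle b\rangle$, and $G_R$ induces on them the natural non-regular action of $D_{2m}$ of degree $m$. So there is no quotient Cayley digraph of a smaller dihedral or cyclic group to induct on.
\item[(v)] Burnside--Schur concerns regular \emph{cyclic} subgroups. For the primitive case you would need Wielandt's theorem that dihedral groups are B-groups.
\end{enumerate}

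\textbf{Step 2 reaches a true conclusion by a wrong argument.} $C\langle t\rangle$ and $C\langle t'\rangle$ are not two complements to $C$ in one extension with quotient $C_2$. In fact $C\langle t,t'\rangle/C$ is dihedral of order $2k$, where $k$ is the order of $tt'C$, so $H^1(C_2,C)=0$ is beside the point. Here is an argument that works:
\begin{enumerate}
\item[(a)] $t$ and $t'$ both invert $C$, so $tt'$ centralizes $C$.
\item[(b)] Both swap the two $C$-orbits $B_1,B_2$, so $tt'$ fixes each of them.
\item[(c)] A transitive abelian group is self-centralizing, so $tt'$ lies in $C^{B_1}\times C^{B_2}$, which has odd order $m^2$.
\item[(d)] Hence $\langle t,t'\rangle$ is dihedral with rotation $tt'$ of odd order. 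Then $t'=t^y$ for some power $y\in A$ of $tt'$.
\item[(e)] Since $y$ centralizes $C$, we get $(C\langle t\rangle)^y=C\langle t'\rangle$.
\end{enumerate}

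So the entire content of the theorem sits in Step 1. The missing idea is an actual mechanism for conjugating $C^\sigma$ to $C$ inside $A$.
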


It seems plausible that their central extensions $\widehat{D}_{4m}$ are DCI-groups as well.

\begin{conjecture}
For every odd square-free number $m$, the central extension $\widehat{D}_{4m}$ of the dihedral group is a DCI-group.
\end{conjecture}

Finally, note that the groups $C_4$, $Q_8$ and $Alt(4)$ are indeed DCI-groups \cite{ConderLi}.

\medskip

The proof of Proposition~\ref{pqDCI} and Theorem~\ref{main} that we will give in Section 2 is based on the fact that CI-groups are solvable.
This fundamental result on CI-groups has been reached in several steps.
After establishing the most basic properties of CI-groups in \cite{BabaiFrankl1} L\'aszl\'o Babai and P\'eter Frankl \cite{BabaiFrankl2} studied the structure of possible non-solvable CI-groups. Making use of some deep theorems from group theory (although not the classification of finite simple groups) they obtained severe restrictions on such groups. In particular, they showed that the only nonabelian composition factor of a CI-group can be either $Alt(5)$ or $\mathrm{PSL}(2,13)$. It was not difficult to exclude
the case $\mathrm{PSL}(2,13)$ (see \cite{LiPraeger}). Finally, Cai Heng Li \cite{Li1} constructed Cayley graphs of valency 29 showing that $Alt(5)$ is not a CI-group, thereby proving that all CI-groups are solvable. Later Marston Conder and Li \cite{ConderLi} found Cayley graphs of valency 5 for the same purpose, using computer search. Although the
the paper \cite{BabaiFrankl2} deals with CI-groups, and not with DCI-groups, the last sentence nevertheless says that ``From these results one can easily derive that every DCI-group is solvable". I don't see any easy way to obtain this result, so my proof will rely on the previously mentioned constructions showing that $Alt(5)$ is not a DCI-group.

In Section 3 I am going to sketch an alternative proof of Theorem~\ref{main}. This proof will be mainly group theoretic, but in contrast to the previous proofs it only makes use of classical theorems found in any textbook on group theory. The keystone of our reasoning will feature a nice counting argument borrowed from L\'aszl\'o R\'edei \cite{Redei}.

This paper is an extended version of my contribution presented at Eurocomb'25 \cite{PPP}.

\section{Proof of Theorem~\ref{main}}

We start by summarizing some well-known arguments.
It is easy to see that subgroups of DCI-groups are DCI-groups themselves.
Since the order of an element in a DCI-group cannot be divisible by $8$ or by a square of an odd prime (see Theorem~\ref{Muzy}), the exponent of a Sylow $p$-subgroup in a DCI-group is
either $p$ or $4$.
An obvious necessary condition for the DCI property of $G$ is that any two subgroups of the same order must be images of each other via automorphisms of $G$.
Applying this criterion to the Sylow $p$-subgroup, one obtains that it is either elementary abelian, or $C_4$, or $Q_8$ (cf.\ \cite{BabaiFrankl1}).
Another useful result is the following strengthening of \cite[Lemma 4.6]{BabaiFrankl1}:

\begin{lemma}\label{nonnilpotent}
Let $G$ be the semidirect product of an elementary abelian $p$-group $P$ (for some prime $p$) and a cyclic group $\langle b \rangle$ acting nontrivially on $P$,
where either $b$ has prime order $q$ ($q\ne p$), or $b$ has order $4$ and $p>2$.
If $G$ is a DCI-group then either $|P|=p>2$ and $b$ acts on $P$ by inversion (hence in this case $b$ has order $2$ or $4$, and $G=D_{2p}$ or $G=\widehat{D}_{4p}$), or $|P|=2^2$ and
$b$ has order $3$ (hence $G=Alt(4)$).
\end{lemma}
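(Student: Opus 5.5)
Since $G$ is a DCI-group, any two subsets of $G$ whose Cayley digraphs are isomorphic must be $\mathrm{Aut}(G)$-conjugate. The plan is to build, inside $G=P\rtimes\langle b\rangle$, two connection sets $S$ and $T$ that give isomorphic digraphs for a cheap reason but cannot be conjugate unless the action of $b$ on $P$ is very special. Write $P$ additively as a faithful $\mathbb{F}_p\langle b\rangle$-module after factoring out the kernel $C_P(b)$; since $b$ acts nontrivially, $[P,b]\ne0$. The cheap source of isomorphisms is the classical one. For $x\in P$ the map $g\mapsto g^{\sigma}$ that is multiplication by an element of $P$ on each coset $P b^i$, with the multiplier depending on $i$, can be checked directly to be a digraph isomorphism $Cay(G,S)\to Cay(G,T)$ whenever $S$ and $T$ differ only by translating the part of $S$ lying in one coset. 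Concretely, I would take
\[
S=\{b\}\cup S_0,\qquad T=\{xb\}\cup S_0 \quad\text{with } S_0\subseteq P.
\]
First I would check that for $S_0$ invariant under the relevant conjugation these two digraphs are isomorphic. The isomorphism is the map that is the identity on $P$ and multiplies $Pb^i$ appropriately. This is the computation of \cite[Lemma 4.6]{BabaiFrankl1}, and I would redo it for both allowed orders of $b$.

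Next I would count. The elements $xb$ with $x\in P$ that are $\mathrm{Aut}(G)$-images of $b$ fixing $S_0$ setwise are controlled by $N_{\mathrm{Aut}(G)}$ acting on complements of $P$. All complements $\langle xb\rangle$ are conjugate under $P$, since coprime action makes $H^1=0$. Hence the condition that $S^\alpha=T$ forces $\alpha$ to normalize $S_0$ and to map $b$ into $P b$ compatibly. Choosing $S_0$ to be a generic subset of $P$ (for instance a set of generators not preserved by any nontrivial element of $N_{GL(P)}(\langle b\rangle)$ except scalars commuting with $b$) leaves only a small group of admissible $\alpha$. Its orbit on the coset $Pb$ then has size at most about $|[P,b]|\cdot|C_{GL}(b)\cap\mathrm{Stab}(S_0)|$. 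The requirement that every $xb$ giving an isomorphic digraph be reached becomes an inequality. Comparing the number of available $x$ with the number of automorphisms should force $|[P,b]|$ to be small and $C_P(b)$ trivial (otherwise $G$ would be decomposable, or the inequality fails). This should leave the eigenvalue structure of $b$ on $[P,b]$ consisting of a single orbit of eigenvalues of minimal size.

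Finally I would analyze the surviving small cases. If $b$ has order $q$ and acts irreducibly of dimension $d$ with $q\mid p^d-1$, the count should give $d=1$ and $q=2$, which is inversion on $C_p$, i.e.\ $D_{2p}$. The only exception is $p=2$, $d=2$, $q=3$, which is $Alt(4)$. If $b$ has order $4$ with $p$ odd, then either $b^2$ acts trivially, reducing to inversion and giving $\widehat{D}_{4p}$, or $b$ has eigenvalues $\pm i$. I expect the latter case, and in general the case $|P|=p$ with $b$ acting by a nonsign scalar of order $q$ (say $q\mid p-1$, $q\geq3$), to be the main obstacle: $G$ is then a Frobenius group $C_p\rtimes C_q$ and the generic counting is too weak. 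For these cases I would try an explicit pair of digraphs. One candidate is $S=\{1\cdot b, a\}$ versus $T=\{a^{k}b, a\}$, where $a$ generates $P$. For this pair I would show directly that every automorphism of $G$ is affine, $a\mapsto a^{u}$ and $b\mapsto a^{v}b$ with $u$ ranging over $\mathbb{F}_p^\*$, so fixing $a$ forces $u=1$ and hence a fixed image of $b$. The remaining difficulty is to exhibit the isomorphism for a $k$ outside that orbit, which I would get from the coset-wise multiplication map above.
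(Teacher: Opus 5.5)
\textbf{There is a genuine gap: your main construction can never produce a counterexample.} The reason is one you state yourself, namely that complements are $P$-conjugate.

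Your coset-wise map $yb^i\mapsto yc_ib^i$ ($y\in P$) from $Cay(G,\{b\}\cup S_0)$ to $Cay(G,\{xb\}\cup S_0)$ closes up consistently around the $o(b)$ cosets only when $(xb)^{o(b)}=1$. Since $o(b)$ is prime to $p$, this means $\langle xb\rangle$ is a complement to $P$, equivalently $x\in[P,b]$. In that case $xb=y^{-1}by$ for some $y\in P$; the exponent is forced because $xb\in Pb$. Conjugation by $y$ fixes $P$ pointwise, since $P$ is abelian, so it fixes $S_0$ and maps $S$ onto $T$, whatever $S_0$ is.

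So the number of admissible $x$ is $|[P,b]|=|P:C_P(b)|$, which is exactly the size of the $P$-orbit of $b$. Your inequality is an equality and forces nothing. The same problem kills the explicit pair $\{b,a\}$, $\{a^kb,a\}$ in $C_p\rtimes C_q$. An automorphism fixing $a$ has $u=1$, but $v$ is still free: conjugation by a suitable power of $a$ fixes $a$ and sends $b$ to $a^kb$ for every $k$, because $b$ acts nontrivially. There is no $k$ outside the orbit. In the coprime setting, moving $b$ inside its coset $Pb$ is invisible to $\mathrm{Aut}(G)$. The obstruction has to come from how $b$ acts on $P$.

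That is what the paper does. It first reduces the irreducible case, via Babai--Frankl's Lemma 4.6, to three leftover configurations.
\begin{itemize}
\item For $|P|=p$ with $o(b)=3$, or $o(b)=4$ acting faithfully, it compares $\{b\}$ with $\{b^{-1}\}$. Both give disjoint unions of directed cycles. If $b$ acts on $P=C_p$ by $z\mapsto z^{\lambda}$, an automorphism with $b\mapsto b^{-1}$ would force $\lambda=\lambda^{-1}$, which is impossible here.
\item For $|P|=9$, $o(b)=4$, it compares $\{a_1,ba_1b^{-1}\}$ with $\{a_1,a_2\}$, where $a_1,a_2$ lie in different $\langle b\rangle$-orbits. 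This uses that automorphisms preserve conjugacy.
\end{itemize}

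For reducible actions, you cannot discard $C_P(b)\ne 1$ on the grounds that $G$ would be decomposable. The lemma has no indecomposability hypothesis, and groups such as $C_p\times D_{2p}$ must be excluded. The paper handles this case in three steps.
\begin{itemize}
\item In a DCI-group, subgroups of equal order are $\mathrm{Aut}(G)$-conjugate, so $b$ must act the same way on every Maschke summand.
\item When a summand is of $Alt(4)$ type, a conjugacy argument finishes the job.
\item Finally it cites the Dobson--Muzychuk--Spiga theorem that $C_p^r\rtimes\langle b\rangle$ with $b$ inverting and $r\ge 2$ is not a DCI-group. That is a substantial theorem, and nothing in your counting could supply it.
\end{itemize}
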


\begin{proof}
First assume that the action of $b$ on $P$ is irreducible (i.e., $P$ is a minimal normal subgroup in $G$).
Besides the cases in our lemma, the cited result of Babai and Frankl \cite[Lemma 4.6]{BabaiFrankl1} on CI-groups allows the following possibilities that we have to rule out here:
(1) $|P|=p$, $q=3$; (2) $|P|=p$, $a$ has order $4$ and acts faithfully on $P$ (in this case $p\equiv 1\pmod 4$); (3) $|P|=3^2$, $a$ has order $4$.

In cases (1) and (2) let $S=\{b\}$ and $T=\{b^{-1}\}$. Then the Cayley digraphs $Cay(G,S)$ and $Cay(G,T)$
are isomorphic, since they are both equal to $p$ disjoint directed cycles of length $3$ or $4$, respectively.
However, since $b$ and $b^{-1}$ act differently on $P$, there is no automorphism of $G$ mapping $S$ to $T$, so $G$ is not a DCI-group in these cases, indeed.

In case (3), since the action of $b$ on $P$ is irreducible, it must be faithful, as well. Then $b$ permutes the $8$ nontrivial elements of $P$ in two cycles. Let us take $a_1,a_2\in P$ from two different cycles of the action of $b$, and let $S=\{a_1,ba_1b^{-1}\}$ and $T=\{a_1,a_2\}$. Then the Cayley digraphs $Cay(G,S)$ and $Cay(G,T)$ are isomorphic, since they are both equal to the disjoint union of four copies of the box product of two directed $3$-cycles. However, there is clearly no automorphism of $G$ mapping $S$ to $T$, so we have also ruled out case (3).

Now consider the case when the action of $b$ is not irreducible on $P$. By Maschke's Theorem \cite[10.16]{Isaacs} we can decompose $P=P_1\times\dots\times P_k$ ($k\ge2$), where the action of $b$ is irreducible on each factor $P_i$ ($i=1,\dots,k$), and we may assume that $b$ acts nontrivially on $P_1$. We already know that either $|P_1|=p>2$ and $b$ acts on $P$ by inversion or $|P_1|=2^2$ and $b$ has order $3$. In the latter case we can take nontrivial elements $a_1\ne a_2 \in P_1$, $a_3\in P_2$, then the Cayley graphs $Cay(P\langle b\rangle,\{a_1,a_2\})$ and $Cay(P\langle b\rangle,\{a_1,a_3\})$ are isomorphic (each of them is a disjoint union of $3|P|/4$ undirected cycles of length $4$), but
$\{a_1,a_3\}$ is not the image of $\{a_1,a_2\}$ under any automorphism of $P\langle b\rangle$, since $a_1$ and $a_2$ are conjugate elements in this group, while $a_1$ and $a_3$ are not. Finally, assume that $b$ (of order 2 or 4) inverts the elements of $P_1$. Then it must act the same way on each $P_i$, since subgroups of the same order are isomorphic
in a DCI-group. Hence $b$ inverts all elements of $P$, i.e., $P\langle b\rangle$ is either a generalized dihedral group or its central extension. By the breakthrough result of  Dobson, Muzychuk and Spiga \cite{DobsonMuzySpiga} these groups are not DCI-groups, so the proof is complete.
\end{proof}

Now we prove Proposition~\ref{pqDCI} that will also serve as a lemma for the proof of Theorem~\ref{main}.

\begin{proof}[Proof of Proposition~\ref{pqDCI}]
The groups listed in Proposition~\ref{pqDCI} are indeed DCI-groups: for $D_{2p}$ see \cite{Babai}; for $\widehat{D}_{4p}$ see \cite{LiLuPPP} (the proof given there for the CI property also works for the DCI property); for $Alt(4)$  see \cite{ConderLi}.
To show the converse, let $G$ be a directly indecomposable group such that its order has exactly two distinct prime divisors. Checking the list of possible CI-groups given in \cite{LiLuPPP}, it remains to show that the following groups are not
DCI-groups: (1) semidirect products $M\rtimes\langle z\rangle$, where $M$ is an elementary abelian $p$-group of order at least $p^2$, $z$ has order 2 or 4, and $z$ acts by inverting elements of $M$; (2) semidirect products $M\rtimes\langle z\rangle$, where $M$ is an elementary abelian $p$-group, $z$ has order 3, and $z$ acts nontrivially on $M$ by raising every element to the same power. By Lemma~\ref{nonnilpotent} these groups are not DCI-groups, indeed.
\end{proof}

Now we turn to the general case.

\begin{proof}[Proof of Theorem~\ref{main}]
Let $G$ be a coprime indecomposable DCI-group, and let the prime divisors of $|G|$ be $p_1,\dots,p_k$, where $k\ge2$.
By the crucial result of Cai Heng Li \cite{Li1} every DCI-group is solvable. For solvable groups Philip Hall's classical theorem yields a system of Sylow subgroups $P_1$,\dots,$P_k$ such that for each prime divisor $p_i$ ($i=1,\dots,k$) a Sylow $p_i$-subgroup is chosen and $P_iP_j=P_jP_i$ holds for each pair $i\ne j$ (see \cite[Problem 3C.3]{Isaacs}).
Let us draw a graph $\Gamma$ with vertices $p_1,\dots,p_k$, where $\{p_i,p_j\}$ is an edge if the subgroup $P_iP_j$ is not nilpotent.
This graph is connected because $G$ is coprime indecomposable. Moreover, Proposition~\ref{pqDCI} gives that the prime 2 is incident to every edge, hence $\Gamma$ is a star.
Note that in the three cases of Proposition~\ref{pqDCI} the Sylow 2-subgroups are different: it is $C_2$ in $D_{2p}$; $C_4$ in $\widehat{D}_{4p}$; and $C_2^2$ in $Alt(4)$.
So if the Sylow 2-subgroup of $G$ is $C_2=\langle z\rangle$, then for every odd prime divisor $p$ of $|G|$ the Sylow $p$-subgroup has order $p$ and is inverted by conjugation with $z$. Since there is no edge between odd primes, the Sylow subgroups corresponding to the odd primes commute with each other, and their direct product is a cyclic normal subgroup of odd square-free order, inverted by $z$. Thus we obtain $G=D_{2m}$ as described in case (2a) of Theorem~\ref{main}. Similarly, if the Sylow 2-subgroup of $G$ is $C_4$, then we
obtain $G=\widehat{D}_{4m}$ as in case (2b). Finally, if the Sylow 2-subgroup of $G$ is $C_2^2$, then $\Gamma$ has just one edge, and $G=Alt(4)$.
\end{proof}

\section{An alternative proof}

As it was explained in Section~1, the solvability of CI-groups has been established applying some deep results from modern group theory. So it seems worthwhile to give a proof
for Theorem~\ref{main} that does not use any heavy machinery from group theory. At the very end of our proof we will have to eliminate $Alt(5)$ from the list of possible DCI-groups, where we can refer to previous constructions due to Li \cite{Li1} and Conder and Li \cite{ConderLi}.

First we prove Corollary~\ref{odd}. Let $G$ be a DCI-group of odd order. Then every Sylow subgroup of $G$ is elementary abelian. 
Suppose that for some Sylow $p$-subgroup $P$ of $G$, the normalizer of $P$ is strictly larger that its centralizer. Then $N_G(P)$ contains an element $b$ of prime order $q$
($q\ne p$) acting nontrivially on $P$. However, it is not possible by Lemma~\ref{nonnilpotent}. Hence the normalizer and the centralizer of $P$ coincide, so Burnside's Theorem on normal $p$-complements (see \cite[Theorem 5.13]{Isaacs}) yields
a normal subgroup $N_p\vartriangleleft G$ such that $G=PN_p$ and $P\cap N_p=1$. This holds for every prime divisor $p$ of $|G|$, hence it follows that $G$ is the direct
product of its Sylow subgroups, as we have claimed.

Now let $G$ be a coprime indecomposable DCI-group of even order, and let $S$ be a Sylow 2-subgroup of $G$. We know that $S$ is either elementary abelian or cyclic of order $4$ or the quaternion group $Q_8$. Suppose that $G$ contains a normal 2-complement $N$. Then $N$ is a DCI-group of odd order, hence abelian. Let $P$ be a Sylow $p$-subgroup of $N$ ($p$ an odd prime), then $P$ is normal in $G$.
Let $b\in S$ be an arbitrary element. Looking at $P\langle b \rangle$, Lemma~\ref{nonnilpotent} yields that either $b$ acts trivially on $P$ or $|P|=p$ and $b$ acts by inverting elements of $P$. Taking into account that subgroups of the same order must be isomorphic in a DCI-group, it follows that elements of the same order in $S$ must act in
the same way on $P$. Hence either the whole $S$ acts trivially on $P$ or $|P|=p$ and $S$ is cyclic (of order 2 or 4) and its generator inverts the elements of $P$. In the first
case $P$ is contained in the center of $G$, so $G$ is the direct product of $P$ and the normal $p$-complement, contrary to the coprime indecomposability of $G$. Hence we get that every Sylow subgroup of $N$ is cyclic of prime order, so $N$ itself is cyclic of square-free order, and the generator of the cyclic Sylow 2-subgroup $S$ acts on $N$ by inverting elements, so $G$ is indeed a dihedral or extended dihedral group as described in cases (b1) and (b2) of Theorem~\ref{main}.

Next we show that $G$ necessarily contains a normal 2-complement, unless $S$ is elementary abelian of order 4. If $S$ is cyclic, then it is an elementary exercise. (It also follows
from Burnside's Theorem.) If $S=Q_8$, then we can invoke a theorem of Frobenius (see \cite[Theorem 5.26]{Isaacs}), that a normal $p$-complement exists iff
$N_G(X)/C_G(X)$ is a $p$-group for every $p$-subgroup $X\le G$. (Although it is not easy to prove this theorem in full generality, for the case $S=Q_8$ it can be derived quite easily.) Now for cyclic subgroups $X$ the condition is automatically fulfilled, so we obtain that $G$ has a normal 2-complement iff $N_G(S)/C_G(S)$ is a 2-group. Here
$N_G(S)/C_G(S)$ is isomorphic to a subgroup of $Aut(Q_8)\cong Sym(4)$, so it is a 2-group, unless there is an element $b$ of order 3 cyclically permuting the three cyclic subgroups of order 4 in $Q_8$. But in the latter case the subgroup $S\langle b \rangle\cong \mathrm{SL}(2,3)$ is not a DCI-group as shown in \cite{ConderLi}. So a normal 2-complement does exist in this case as well. Finally, if $S$ is an elementary abelian 2-group of order at least 8, then Lemma~\ref{nonnilpotent} can be applied again to get
$N_G(S)=C_G(S)$, so Burnside's Theorem yields a normal 2-complement also in this case.

So it remains to investigate the case when the Sylow 2-subgroup $S$ of $G$ is elementary abelian of order 4, and $G$ does not contain a normal 2-complement. Then we must have
$N_G(S)>C_G(S)$, so there is an element of order 3 in the normalizer of $S$ acting nontrivially.

Let $p\ge5$ be a prime divisor of the order of $G$, and $P$ a Sylow $p$-subgroup. We know that $P$ is elementary abelian. If $G$ contains a normal $p$-complement $N$, then the Frattini argument \cite[Lemma 1.13]{Isaacs} yields that for every prime $r \mid |N|$ there exists a Sylow $r$-subgroup $R$ normalized by $P$. However, by Lemma~\ref{nonnilpotent} $P$ must centralize $R$, hence $G=N\times P$, contrary to the assumption that $G$ is coprime indecomposable. So we obtain that $G$ does not contain a normal $p$-complement, hence by Burnside's theorem we must have $N_G(P)>C_G(P)$. Now Lemma~\ref{nonnilpotent} gives that $|P|=p$ and $|N_G(P)/C_G(P)|=2$. Note also, that because $N_G(P)$ contains a dihedral group of order $2p$, there cannot be any element of order $2p$ in $G$.

For $p=3$ it is a little more involved to reach the same conclusion. Suppose that $G$ contains a normal $3$-complement $N$. Let $S$ be a Sylow 2-subgroup in $N$ normalized by the
Sylow 3-subgroup $P$. As $SP$ is a DCI-group, we obtain $|P|=3$ and $SP\cong Alt(4)$. Since 3 does not divide the order of $N$, the above arguments yield that $N$ contains a normal $2$-complement $M$, and $M$ is abelian. For every element $x\in SP$ of order 3, the subgroup $M\langle x\rangle$ is a DCI-group of odd order, hence $x\in C_G(M)$, therefore $M$ lies in the center of $G$, $G=M\times SP$. Thus $G=Alt(4)$ in this case (belonging to part (c) of Theorem~\ref{main}), since $G$ is coprime indecomposable. If this is not the case, then we obtain the same way as for the prime divisors greater than 3, that the Sylow 3-subgroup has order 3, its centralizer has index 2 in the normalizer, and there is no element of order 6 in $G$.

Let $\Pi$ be the Gruenberg--Kegel graph of $G$, with vertices representing the prime divisors of $|G|$ and edges drawn between primes if there is an element of the group whose order is the product of the two primes. Our previous observations show that 2 is an isolated vertex in $\Pi$. We claim that every connected component of $\Pi$ is a complete graph. Indeed, if $\{p_1,p_2\}$ and $\{p_2,p_3\}$ are edges of the graph, then the centralizer of a Sylow $p_2$-subgroup contains elements of order $p_1$ and $p_3$. This centralizer is a DCI-group of odd order, hence it is abelian by Corollary~\ref{odd}, so the elements of order $p_1$ and $p_3$ commute, guaranteeing the edge $\{p_1,p_3\}$. Now
take the decomposition of the vertex set of $\Pi$ into connected components $\Pi=\{2\}\cup\Pi_1\cup\dots\cup\Pi_k$, and let $n_i$ denote the product of the primes in $\Pi_i$.
Let $n=|G|=4\prod_{i=1}^{k} n_i$. Here $n_1,\dots,n_k$ are pairwise coprime odd square-free numbers.
Our previous observations yield that if $p\in \Pi_i$, then the normalizer of a Sylow $p$-subgroup is a dihedral group of order $2n_i$. Furthermore, the normalizer of a Sylow 2-subgroup is $Alt(4)$.

Now we are going to show that the collection of all Sylow 2-subgroups together with all cyclic subgroups of order $n_i$ ($i=1,\dots,k$) forms a partition of $G$, meaning that every nontrivial element of $G$ belongs to exactly one of these subgroups.
Let $1\ne x\in G$. Then a suitable power $y$ of $x$ has prime order and $x\in C_G(y)$. If $y$ has order 2, then $C_G(y)$ does not contain any nontrivial element of odd order,
hence $C_G(y)$ is the unique Sylow 2-subgroup containing $x=y$.
If $y$ has odd prime order $p\in \Pi_i$, then $C_G(y)$ is a DCI-group of odd order, hence it is abelian, and since with the exception of Sylow 2-subgroups, all Sylow subgroups of $G$ have prime order, $C_G(y)$ is cyclic of order $n_i$, and it is the unique subgroup in our system that contains $x$. Clearly, all cyclic subgroups of order $n_i$ are conjugate
to each other, hence their number is $n/2n_i$. The number of Sylow 2-subgroups is $n/12$. As we have partitioned $G$, we obtain
\[
n-1=\frac{n}{12}\cdot 3 + \sum_{i=1}^{k} \frac{n}{2n_i}(n_i-1),
\]
and dividing by $n$ we get
\[
1-\frac{1}{n} = \frac{1}{4} + \frac{k}{2} - \sum_{i=1}^{k} \frac{1}{2n_i}.
\]
If $k=1$, then the left hand side is greater than 3/4, while the right hand side is smaller. If $k\ge 3$, then the left hand side is less than 1, while the
right hand side is greater than $\frac{1}{4}+3\left(\frac{1}{2}-\frac{1}{6}\right)>1$. Hence only $k=2$ is possible. Then we obtain $4n_1n_2-1=n_1n_2+2n_2(n_1-1)+2n_1(n_2-1)$,
that is $(n_1-2)(n_2-2)=3$, so $n_1=3$, $n_2=5$ (or vice versa), $|G|=60$ and $G$ contains five subgroups isomorphic to $Alt(4)$, hence $G\cong Alt(5)$. However,
$Alt(5)$ is not a DCI-group (see \cite{Li1} or \cite{ConderLi}), finishing the proof.

\end{document}